\DeclarePairedDelimiterX{\norm}[1]{\lVert}{\rVert}{#1}
\newtheorem{thm}{Theorem}[section]
\newtheorem*{mainthm}{Main Theorem}
\newtheorem{prop}[thm]{Proposition}
\newtheorem{lem}[thm]{Lemma}
\newtheorem{cor}[thm]{Corollary}
\theoremstyle{definition}
\newtheorem{definition}[thm]{Definition}
\newtheorem{example}[thm]{Example}
\theoremstyle{remark}
\newtheorem{remark}[thm]{Remark}
\DeclareMathOperator{\vol}{vol}
\begin{document}

\begin{abstract}
We consider closed and orientable immersed hypersurfaces of translational manifolds. Given a vector field on such a hypersurface, we define a perturbation
of its Gauss map, which allows us to obtain topological invariants for the immersion that depends on the geometry of the manifold and on the ambient space.
We use these quantities to find obstructions to the existence of certain codimension-one foliations and distributions.
\end{abstract}

\title{Topological invariants for closed hypersurfaces}

\author{\'{I}caro Gon\c{c}alves}
\author{Eduardo Longa}

\address{Departamento de Matem\'{a}tica, Instituto de Matem\'{a}tica e Estat\'{i}stica, Universidade de S\~{a}o Paulo, R. do Mat\~{a}o 1010,
S\~{a}o Paulo, SP 05508-900, Brazil}
\email{icarog@ime.usp.br}

\address{Departamento de Matem\'{a}tica, Instituto de Matem\'{a}tica e Estat\'{i}stica, Universidade Federal do Rio Grande do Sul, Av. Bento Gon\c{c}alves 9500, 
Porto Alegre, RS 91509-900, Brazil} 
\email{eduardo.longa@ufrgs.br}

\subjclass[2010]{47H11; 53C12; 57R25; 57R42}

\keywords{Hypersurfaces; Gauss map; foliations; distributions}

\maketitle

\vspace{-0.5cm}

\begin{center}
\emph{Dedicated to Professor Fabiano Brito, on the occasion of his $70^{\text{th}}$ birthday}.
\end{center}

\bigskip
\let\thefootnote\relax\footnote{\'{I}. Gon\c{c}alves was partially supported by a scholarship from CNPq (Brazil), 141113/2013-8 and by the National Postdoctoral Program/CAPES  (Brazil).
 E. Longa was partially supported by a scholarship from CNPq (Brazil), 130653/2015-2.}

\section{Introduction}

The study of hypersurfaces in Riemannian manifolds constitutes a central feature in differential geometry. Their extrinsic aspects have branched out into new
concepts and tools and have been instigating profound questions concerning the geometry of submanifolds over the last decades. Regarding how the geometry of a 
Riemannian manifold relates to its topology, closed Euclidean hypersurfaces played an initial role towards building the concepts behind Gauss-Bonnet-Chern 
theorem. In this framework, translating geometry to topology can be achieved through the Gauss map of the hypersurface. The determinant of its Jacobian matrix
provides the geometry, while for the even dimensional case its degree equals half the Euler characteristic of the manifold. Although the degree no longer relates
to the Euler number when the dimension is odd, it remains to be an important tool in the theory. For instance, a direct consequence of a theorem of Hopf is that
this degree characterizes homotopic immersions, and Milnor's results in \cite{milnor56} exhibits what are the possible values it can assume. 

A natural extension is to consider Riemannian manifolds which admit sufficient structure in order to have a proper definition of a Gauss map for its
hypersurfaces. A major step in this direction was taken by Ripoll in \cite{ripoll91}. The author examined hypersurfaces of Lie groups equipped with a left
invariant metric, the Gauss map being the left translation of the unit normal into the unit sphere of the Lie algebra. Recently, the second author and Ripoll
\cite{longa2016} introduced the notion of a translational Riemannian manifold, where to each point is assigned an isometry mapping the tangent space at this 
point to a given vector space. Thus, the notion of a Gauss map extends to hypersurfaces in this broader class of manifolds.


On the other hand, unitary vector fields on closed Euclidean hypersurfaces together with the normal map are the fundamental ingredients for constructing new 
integral formulae for distributions on these hypersurfaces, \cite{brito2016}. When compared to the main integrals for foliations and distributions found in the 
literature, e.g. \cite{andrzejewski2010}, \cite{brito81}, \cite{rovenski2012}, \cite{rovenski2011} and references therein, these invariants exhibit a new behaviour: they depend on 
the topology of the foliated space by means of the degree of the Gauss map. In the present article, we obtain similar topological invariants for hypersurfaces
immersed in translational Riemannian manifolds. We prove the following.

\begin{mainthm} Let $M^{n+1}$ be a closed, oriented and immersed hypersurface of a translational Riemannian manifold $\left( \overline{M}, \Gamma \right)$, with unit
normal $\eta$. Suppose $\chi(M) = 0$. For a unitary vector field $v$ on $M$ and positive $t$, consider the map $\varphi_t^v: M \to V$ defined by
\begin{align*}
\varphi_t^v(p) = \Gamma_p(\eta(p) + tv(p)), \quad p \in M.
\end{align*}
Furthermore, define functions $\mu_k : M \to \mathbb{R}$ by 
\begin{align*}
\det(D \varphi_t^v) = \sqrt{1+t^2} \sum_k \mu_k t^k.
\end{align*}
Then, the following formula holds:
\begin{align}
\int_M \mu_k = 
\begin{cases}
\deg(\gamma) \vol(\mathbb{S}^{n+1}) \binom{n/2}{k/2}, &\quad \text{if } n \text{ and } k \text{ are even} \\ 
0, &\quad \text{if } n \text{ or } k \text{ is odd}  
\end{cases} \label{curvature integrals}
\end{align}
where $\gamma : M \to \mathbb{S}^{n+1}$ is the Gauss map associated to $\eta$.
\end{mainthm}

The last section will be devoted to some applications of the above theorem to the theory of foliations, and some obstructions to certain distributions as well. 

\section{Translational manifolds an the Gauss map} \label{The map}

The hypersurfaces will be lying in a translational ambient, according to the following definition.

\begin{definition} A translational Riemannian manifold consists of a pair $\left( \overline{M}, \Gamma \right)$, where $\overline{M}$ is an $(n+2)$-dimensional 
Riemannian manifold, $\Gamma : T \overline{M} \to \overline{M} \times V$ is a smooth vector bundle map, and $V$ is an $(n+2)$-dimensional real vector space with 
an inner product such that the map $\Gamma_p : T_p \overline{M} \to V$ implicitly defined by
\begin{align*}
v \mapsto \Gamma(p,v) = \left( p,\Gamma_{p}(v) \right)
\end{align*} 
is an isometry for every point $p \in \overline{M}$. When $\Gamma$ is understood from context, we simply say $\overline{M}$ is a translational manifold.
\end{definition}

Throughout this paper, a translational manifold $\left( \overline{M}, \Gamma \right)$ will be fixed. Let $M^{n+1}$ be a compact, connected and orientable manifold 
and let $f : M^{n+1} \to \overline{M}$ be an immersion of $M$ into $\overline{M}$, with a normal unitary vector field $\eta : M \to T\overline{M}|_M$. We identify 
small neighbourhoods of $M$ and their images via $f$ and the tangent spaces to $M$ with their images via $Df$. Denote by $\mathbb{S}^{n+1}(r)$ the sphere of radius $r$ 
of $V$, centered at the origin; $\mathbb{S}^{n+1}$ will denote the unit sphere of $V$.

\begin{definition} The Gauss map $\gamma : M \to \mathbb{S}^{n+1}$ associated to the normal vector field $\eta$ is given by
\begin{align*}
\gamma(p) = \Gamma_p(\eta(p)), \quad p \in M.
\end{align*}
\end{definition}

Next, we define a special type of vector field that will play an important role.

\begin{definition} Given a vector $X \in T_p \overline{M}$, the vector field $\widetilde{X} \in \mathfrak{X}(\overline{M})$ defined by
\begin{align*}
\widetilde{X}(q) = \left( \Gamma_q^{-1} \circ \Gamma_p \right)(X), \quad q \in \overline{M}
\end{align*}
is called the $\Gamma$-invariant (or simply invariant) vector field of $\overline{M}$ associated with $X$.
\end{definition}

\begin{example}[Left translation on Lie groups] \label{Left translation} Let $\overline{M} = G$ be a Lie group and $V = \mathfrak{g}$ be the Lie algebra of $G$,
considered as the tangent space of $G$ at the identity. Choose a left invariant metric for $G$ and define $\Gamma : TG \to G \times \mathfrak{g}$ by
\begin{align*}
\Gamma(g, v) = \left( g, DL_{g^-1}(g) \cdot v \right), \quad (g, v) \in TG,
\end{align*}
where $L_x : y \mapsto xy$ is the left translation. Here, the $\Gamma$-invariant vector fields are the left invariant vector fields of $G$. This is the setting 
studied in \cite{ripoll91}. 
\end{example}

\begin{example}[Parallel transport] \label{Parallel transport} Assume $\overline{M}$ is a Cartan-Hada\-mard manifold, that is, a complete, connected and simply connected
Riemannian manifold with nonpositive sectional curvature. Given a distinguished point $p_0\in \overline{M}$, the exponential map at $p_0$ is, by Hadamard's Theorem, a 
diffeomorphism from $T_{p_0} \overline{M}$ onto $\overline{M}$, so that every point on the manifold can be joined to $p_0$ by a unique geodesic. Setting 
$V = T_{p_0} \overline{M}$, we may then define $\Gamma_p : T_p \overline{M} \to V$ by choosing $\Gamma_p(v)$ as being the parallel transport of $v \in T_p \overline{M}$ to 
$T_{p_0}\overline{M}$ along this geodesic. Thus, the invariant vector fields here are the parallel vector fields along the geodesic rays issuing from $p_0$. 

More generally, given any complete Riemannian manifold $\overline{M}$ and a point $p_0$ in $\overline{M}$, we can define the parallel transport to $T_{p_0} \overline{M} $ on 
$\overline{M} \setminus C_{p_0}$ as above, where $C_{p_0}$ is the cut locus of $p_0$.
\end{example}

Let $\overline{\nabla}$ be the Levi-Civita connection of $\overline{M}$. Recall that the shape operator of $M$ is the section
$A$ of the vector bundle $\operatorname{End}(TM)$ of endomorphisms of $TM$ given by
\begin{align*}
A_p(X) = - \overline{\nabla}_X \eta, \quad (p, X) \in TM.
\end{align*}

Similarly, we define another section of $\operatorname{End}(TM)$, which depends additionally on the choice of the translation $\Gamma$.

\begin{definition} \label{invariant shape operator} The invariant shape operator of $M$ is the section $\alpha$ of the bundle $\operatorname{End}(TM)$ given by
\begin{align*}
\alpha_p(X) = \overline{\nabla}_X \widetilde{\eta(p)}, \quad (p, X) \in TM.
\end{align*}
\end{definition}

The proposition below establishes a relationship between $\gamma$ and the extrinsic geometry of $M$.

\begin{prop} \label{relationship} For any $p \in M$, the following identity holds:
\begin{align*}
\Gamma_p^{-1} \circ D \gamma(p) = - \left( A_p + \alpha_p \right).
\end{align*}
\end{prop}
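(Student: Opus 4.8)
The plan is to compute $D\gamma(p)$ directly from the definition $\gamma(p) = \Gamma_p(\eta(p))$ by differentiating along a curve in $M$. Fix $p \in M$ and $X \in T_pM$, and let $c: (-\varepsilon, \varepsilon) \to M$ be a curve with $c(0) = p$ and $c'(0) = X$. Then $\gamma(c(s)) = \Gamma_{c(s)}(\eta(c(s))) \in V$, and since $V$ is a fixed vector space, $D\gamma(p)X = \frac{d}{ds}\big|_{s=0} \Gamma_{c(s)}(\eta(c(s)))$. The difficulty — and the main obstacle — is that the isometry $\Gamma$ varies with the base point, so this is not simply $\Gamma_p(\overline{\nabla}_X \eta)$; there is an extra term coming from the $s$-dependence of $\Gamma_{c(s)}$ itself. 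The natural device to isolate that term is the $\Gamma$-invariant vector field: for a fixed vector $w \in V$, the field $q \mapsto \Gamma_q^{-1}(w)$ is smooth on $\overline{M}$, and $\Gamma$ maps it to the constant field $w$.

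The key computational step is to split $D\gamma(p)X$ into two pieces by a product-rule argument. Write $\eta(c(s))$ in terms of its value $w := \eta(p) \in T_p\overline{M}$: consider the decomposition where one keeps the normal vector ``frozen'' as the invariant field $\widetilde{\eta(p)}$ and separately accounts for how $\eta$ actually moves. Concretely, I would argue that
\begin{align*}
D\gamma(p)X = \left.\frac{d}{ds}\right|_{s=0} \Gamma_{c(s)}\big(\widetilde{\eta(p)}(c(s))\big) + \left.\frac{d}{ds}\right|_{s=0} \Gamma_p\big(\text{(the } \overline{M}\text{-derivative of } \eta \text{ along } c)\big).
\end{align*}
The first term is $\frac{d}{ds}\big|_{s=0}\,\Gamma_{c(s)}(\Gamma_{c(s)}^{-1}(\Gamma_p(\eta(p)))) = \frac{d}{ds}\big|_{s=0}\,\Gamma_p(\eta(p)) = 0$, because $\Gamma_{c(s)} \circ \Gamma_{c(s)}^{-1}$ is the identity on $V$ — so the invariant-field contribution, after applying $\Gamma$, is constant. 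This forces $D\gamma(p)X = \Gamma_p\big(\overline{\nabla}_X(\eta - \widetilde{\eta(p)})\big)$ once one correctly bookkeeps the covariant versus ordinary derivatives; here one uses that $\Gamma_p$ is linear, so passing $\Gamma_p^{-1}$ across recovers a vector in $T_p\overline{M}$. Then $\overline{\nabla}_X \eta = -A_p(X)$ by definition of the shape operator, and $\overline{\nabla}_X \widetilde{\eta(p)} = \alpha_p(X)$ by Definition~\ref{invariant shape operator}, giving $\Gamma_p^{-1} \circ D\gamma(p) = -A_p - \alpha_p$.

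To make the splitting rigorous I would work in a local frame: choose a local orthonormal frame $\{e_1, \dots, e_{n+2}\}$ of $T\overline{M}$ near $p$ that is $\Gamma$-invariant (i.e. $\Gamma_q(e_i(q))$ is a constant vector in $V$ for each $i$), which exists by taking an orthonormal basis of $V$ and pulling back via $\Gamma_q^{-1}$ — this uses that $\Gamma_q$ is an isometry for every $q$. Write $\eta = \sum_i a_i e_i$ with smooth functions $a_i$ on $M$. Then $\gamma = \sum_i a_i \,\Gamma_{(\cdot)}(e_i) = \sum_i a_i w_i$ with $w_i \in V$ constant, so $D\gamma(p)X = \sum_i (X a_i) w_i = \sum_i (X a_i)\,\Gamma_p(e_i(p))$, hence $\Gamma_p^{-1}(D\gamma(p)X) = \sum_i (Xa_i)\, e_i(p)$. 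On the other hand, since the $e_i$ are $\Gamma$-invariant, $\widetilde{\eta(p)} = \sum_i a_i(p)\, e_i$ near $p$ with the $a_i(p)$ now \emph{constants}, so $\alpha_p(X) = \overline{\nabla}_X \widetilde{\eta(p)} = \sum_i a_i(p)\,\overline{\nabla}_X e_i$, while $-A_p(X) = \overline{\nabla}_X \eta = \sum_i (Xa_i)\,e_i(p) + \sum_i a_i(p)\,\overline{\nabla}_X e_i$. Subtracting yields $-A_p(X) - \alpha_p(X) = \sum_i (Xa_i)\,e_i(p) = \Gamma_p^{-1}(D\gamma(p)X)$, which is exactly the claimed identity. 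The only subtlety to check carefully is that $-A_p(X) - \alpha_p(X)$ indeed lands in $T_pM$ (not merely $T_p\overline{M}$), which follows since the left-hand side $\Gamma_p^{-1}\circ D\gamma(p)$ maps into $T_pM$ — or, directly, because $\langle \eta, \eta\rangle \equiv 1$ along $M$ forces the normal components of $\overline{\nabla}_X \eta$ and of $\overline{\nabla}_X \widetilde{\eta(p)}$ (the latter using $|\widetilde{\eta(p)}| \equiv 1$ on $\overline{M}$) to vanish.
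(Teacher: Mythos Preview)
Your proof is correct and follows essentially the same route as the paper's: both arguments expand $\eta$ in a $\Gamma$-invariant orthonormal frame, differentiate $\gamma = \sum_i a_i w_i$ with the $w_i \in V$ constant to get $\Gamma_p^{-1}(D\gamma(p)X) = \sum_i X(a_i)\,e_i(p)$, and then apply the Leibniz rule to $\overline{\nabla}_X \eta$ to split off $\alpha_p(X)$. The only cosmetic difference is that the paper fixes $e_{n+2} = \eta(p)$ so that $a_i(p) = \delta_{i,n+2}$ and $\sum_i a_i(p)\,\overline{\nabla}_X e_i$ collapses to the single term $\overline{\nabla}_X \widetilde{e}_{n+2} = \alpha_p(X)$, whereas you keep the sum general; your added remark that both sides lie in $T_pM$ is a nice point the paper leaves implicit.
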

\begin{proof}
Fix $p \in M$ and an orthonormal basis $\{e_1, \dots, e_{n+2} \}$ of $T_p \overline{M}$ such that $e_{n+2} = \eta(p)$. The vector fields
$\widetilde{e}_1, \dots, \widetilde{e}_{n+2}$ form a global orthonormal referential of $T \overline{M}$, so that we can write 
\begin{align} 
\eta = \sum_{i=1}^{n+2} a_i \widetilde{e}_i \label{normal}
\end{align}
for certain functions $a_i \in C^{\infty}(M)$. Notice that $a_i(p) = 0$ for $i \in \{1, \dots, n + 1 \}$ and $a_{n+2}(p) = 1$.

For $y \in M$ we have
\begin{align*}
\gamma(y) = \Gamma_y(\eta(y)) = \Gamma_y \left(\sum_{i=1}^{n+2} a_i(y) \widetilde{e}_i (y) \right) = \sum_{i=1}^{n+2} a_i(y) \Gamma_p(e_i).
\end{align*}
Therefore, if $X \in T_p M$,
\begin{align} 
\Gamma_p^{-1}(D \gamma(p) \cdot X) = \Gamma_p^{-1} \left( \sum_{i=1}^{n+2} X(a_i) \Gamma_p(e_i) \right) = \sum_{i=1}^{n+2} X(a_i) e_i. \label{composta} 
\end{align} 
From (\ref{normal}) and (\ref{composta}) we obtain
\begin{align*}
-A_p(X) &= \overline{\nabla}_X \eta
         = \sum_{i=1}^{n+2} \overline{\nabla}_X(a_i \widetilde{e}_i)
         = \sum_{i=1}^{n+2} \left[ a_i(p) \overline{\nabla}_X \widetilde{e}_i + X(a_i) \widetilde{e}_i (p) \right]  \\ 
        &= \overline{\nabla}_X \widetilde{e}_{n+2} + \sum_{i=1}^{n+2} X(a_i) e_i = \alpha_p(X) + \Gamma_p^{-1}(D \gamma(p) \cdot X),
\end{align*}
which gives the desired result.
\end{proof}

\section{Proof of the main theorem} \label{The main theorem}

Assume from now on that the Euler characteristic of $M$ is zero, so that it admits nonvanishing vector fields. Let $v$ be such a vector field, with unit norm. 
For $t > 0$, define a map $\varphi_t^v : M \to \mathbb{S}^{n+1}(\sqrt{1+t^2})$ by
\begin{align*}
\varphi_t^v(p) = \Gamma_p \left( \eta(p) + t v(p) \right), \quad p \in M.
\end{align*}
The degree formula gives
\begin{align}
\int_M \left( \varphi_t^v \right)^* \omega = \deg \left( \varphi_t^v \right) \int_{\mathbb{S}^{n+1}(\sqrt{1+t^2})} \omega 
= \deg \left( \varphi_t^v \right) \vol (\mathbb{S}^{n+1}) \left( \sqrt{1 + t^2} \right)^{n+1}, \label{degree formula}
\end{align}
where $\omega$ is the volume form of $\mathbb{S}^{n+1}(\sqrt{1 + t^2})$. But $\left( \varphi_t^v \right)^* \omega = \det(D \varphi_t^v) \omega_M$, for $\omega_M$
the volume form of $M$. In the sequel, we will calculate this determinant.

\begin{lem} Let $\overline{v} : M \to \mathbb{S}^{n+1}$ be defined by $\overline{v}(p) = \Gamma_p(v(p))$. Then, the following formula holds
for every point $p \in M$ and $X \in T_p M$:
\begin{align*}
\Gamma_p^{-1} \left( D \overline{v}(p) \cdot X \right) &= \overline{\nabla}_X v - \overline{\nabla}_X \widetilde{v(p)} \\
&= \nabla_X v  + \langle v(p), A_p(X) \rangle \eta(p) - \overline{\nabla}_X \widetilde{v(p)}
\end{align*}
\end{lem}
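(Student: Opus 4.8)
The plan is to mimic the structure of the proof of Proposition~\ref{relationship}, working in the same adapted frame. The key observation is that $\overline{v}$ differs from $\gamma$ only in that the unit normal $\eta$ has been replaced by the tangent field $v$; so I expect a decomposition of $\overline{\nabla}_X v$ into an ``invariant part'' and a ``derivative of coefficients part'' that mirrors equation~(\ref{composta}).

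First I would fix $p \in M$ and the orthonormal basis $\{e_1,\dots,e_{n+2}\}$ of $T_p\overline{M}$ with $e_{n+2}=\eta(p)$, exactly as before, and expand $v = \sum_{i=1}^{n+2} b_i\widetilde{e}_i$ for smooth functions $b_i \in C^\infty(M)$. Since $v(p)\in T_pM$ is orthogonal to $\eta(p)=e_{n+2}$, we have $b_{n+2}(p)=0$, while $b_i(p) = \langle v(p),e_i\rangle$ for $i\le n+1$. Differentiating along $X\in T_pM$ gives
\begin{align*}
\overline{\nabla}_X v = \sum_{i=1}^{n+2}\left[b_i(p)\,\overline{\nabla}_X\widetilde{e}_i + X(b_i)\,\widetilde{e}_i(p)\right] = \overline{\nabla}_X\widetilde{v(p)} + \sum_{i=1}^{n+2} X(b_i)\,e_i,
\end{align*}
where I used that $\sum_i b_i(p)\overline{\nabla}_X\widetilde{e}_i = \overline{\nabla}_X\widetilde{\sum_i b_i(p)e_i} = \overline{\nabla}_X\widetilde{v(p)}$ by linearity of the tilde construction. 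On the other hand, computing $D\overline{v}(p)\cdot X$ directly as in the proof of Proposition~\ref{relationship} yields $\Gamma_p^{-1}(D\overline{v}(p)\cdot X) = \sum_{i=1}^{n+2} X(b_i)\,e_i$. Comparing the two displays gives the first claimed identity $\Gamma_p^{-1}(D\overline{v}(p)\cdot X) = \overline{\nabla}_X v - \overline{\nabla}_X\widetilde{v(p)}$.

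For the second identity I would split $\overline{\nabla}_X v$ into its component tangent to $M$ and its component along $\eta$, using the Gauss formula: $\overline{\nabla}_X v = \nabla_X v + \langle \overline{\nabla}_X v, \eta(p)\rangle\eta(p)$, since $M$ is a hypersurface. Then $\langle \overline{\nabla}_X v,\eta(p)\rangle = X\langle v,\eta\rangle - \langle v(p),\overline{\nabla}_X\eta\rangle = 0 + \langle v(p), A_p(X)\rangle$, because $v$ is tangent so $\langle v,\eta\rangle\equiv 0$ and $\overline{\nabla}_X\eta = -A_p(X)$ by definition of the shape operator. Substituting gives $\overline{\nabla}_X v = \nabla_X v + \langle v(p),A_p(X)\rangle\eta(p)$, and plugging this into the first identity yields the second.

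The only step requiring a little care is the legitimacy of replacing $\sum_i b_i(p)\overline{\nabla}_X\widetilde{e}_i$ by $\overline{\nabla}_X\widetilde{v(p)}$: this uses that $X\mapsto\widetilde{X}$ is linear (immediate from the definition $\widetilde{X}(q)=(\Gamma_q^{-1}\circ\Gamma_p)(X)$, since each $\Gamma_q^{-1}\circ\Gamma_p$ is linear) and that $\overline{\nabla}_X$ commutes with finite linear combinations of vector fields with constant coefficients — here the coefficients $b_i(p)$ are the values at the fixed point $p$, hence constants. Everything else is the same frame computation already carried out for Proposition~\ref{relationship}, so I anticipate no real obstacle; the statement is essentially a bookkeeping exercise once the adapted frame is in place.
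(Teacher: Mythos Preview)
Your proof is correct and follows essentially the same route the paper indicates, namely repeating the computation of Proposition~\ref{relationship} with $v$ in place of $\eta$. The only cosmetic difference is that the paper additionally sets $e_{n+1}=v(p)$, which makes the identification $\sum_i b_i(p)\,\overline{\nabla}_X\widetilde{e}_i=\overline{\nabla}_X\widetilde{v(p)}$ immediate (all $b_i(p)$ vanish except $b_{n+1}(p)=1$), whereas you obtain it via linearity of $X\mapsto\widetilde{X}$; both arguments are equally valid.
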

\begin{proof}
Imitate the proof of Proposition \ref{relationship} with an orthonormal basis $\{ e_1, \dots, e_{n+2} \}$ of $T_p \overline{M}$ such that $e_{n+1} = v(p)$ and
$e_{n+2} = \eta(p)$.
\end{proof}

From the Lemma and Proposition \ref{relationship}, it follows that
\begin{align*}
\Gamma_p^{-1} \left( D \varphi_t^v(p) \cdot X \right) =
- \left( A_p(X) + \alpha_p(X) \right) + t \left[ \nabla_X v  + \langle v(p), A_p(X) \rangle \eta(p) - \overline{\nabla}_X \widetilde{v(p)} \right].
\end{align*}

Consider an orthonormal basis $\{ e_1, \dots, e_n, e_{n+1} = v(p) \}$ of $T_p M$. Defining $u$ by
\begin{align*}
u = \frac{v(p)}{\sqrt{1 + t^2}} - \frac{t \eta(p)}{\sqrt{1 + t^2}} \, ,
\end{align*}
one can directly check that $\{ \Gamma(e_1), \dots, \Gamma(e_n), \Gamma(u) \}$ is an orthonormal basis of $\mathbb{S}^{n+1}(\sqrt{1 + t^2})$. We will express 
$D \varphi_t^v(p)$ relative to these basis.

As a matter of convention, uppercase indices will run from $1$ to $n+1$, whereas lowercase ones will vary from $1$ to $n$. This said, we define the following 
quantities:

\begin{minipage}{.34\linewidth}
  \centering
  \begin{align*}
  h_{AB} &= \langle A_p(e_B), e_A \rangle \\
  \alpha_{AB} &= \langle \alpha_p(e_B), e_A \rangle
  \end{align*}
\end{minipage}%
\begin{minipage}{.33\linewidth}
  \centering 
  \begin{align*}
  a_{ij} &= \langle \nabla_{e_j} v, e_i \rangle \\
  \tilde{a}_{ij} &= \langle \overline{\nabla}_{e_j} \widetilde{v(p)}, e_i \rangle 
  \end{align*}
\end{minipage}%
\begin{minipage}{.34\linewidth} 
  \centering
  \begin{align*}
  v_i &= \langle \nabla_{v(p)} v, e_i \rangle \\
  \tilde{v}_i &= \langle \overline{\nabla}_{v(p)} \widetilde{v(p)}, e_i \rangle 
  \end{align*}
\end{minipage}
\vspace{2mm}
Since $\Gamma_p$ is an isometry, one can show that (see also \cite{brito2016})
\begin{align*}
\langle D \varphi_t^v(p) \cdot e_j, \Gamma_p(e_i) \rangle &=
-(h_{ij} + \alpha_{ij}) + t(a_{ij} - \tilde{a}_{ij}) \\
\langle D \varphi_t^v(p) \cdot e_j, \Gamma_p(u) \rangle &= 
- \sqrt{1 + t^2} \left( h_{n+1,j} + \alpha_{n+1,j} \right) \\
\langle D \varphi_t^v(p) \cdot v(p), \Gamma_p(e_i) \rangle &= 
-(h_{i,n+1} + \alpha_{i,n+1}) + t(v_i - \tilde{v}_i) \\
\langle D \varphi_t^v(p) \cdot v(p), \Gamma_p(u) \rangle &= 
- \sqrt{1 + t^2} \left( h_{n+1,n+1} + \alpha_{n+1,n+1} \right)
\end{align*}
Defining line vectors by
\begin{align*}
V_i &= \left( a_{i1} - \tilde{a}_{i1}, \dots, a_{i,n} - \tilde{a}_{i,n}, v_i - \tilde{v}_i \right) \\
H_A &= \left( h_{A1} + \alpha_{A1}, \dots, h_{A,n+1} + \alpha_{A,n+1} \right), \\
\end{align*}
the matrix of $D \varphi_t^v(p)$ is, then,
\begin{align*}
\renewcommand{\arraystretch}{1.5} 
D \varphi_t^v(p) =
\left[
\begin{array}{c} 
-H_1 + t V_1 \\
\vdots \\
-H_n + t V_n \\
-\sqrt{1 + t^2} H_{n+1}
\end{array}
\right]
\end{align*}
The multilinearity of the determinant gives
\begin{align*}
\det \left( D \varphi_t^v(p) \right) = \sqrt{1 + t^2} \sum_{k = 0}^n \mu_k t^k,
\end{align*}
where 
\begin{align*}
\mu_0 &= (-1)^{n+1} \det \left( H_1, \dots, H_n, H_{n+1} \right) \\
\mu_1 &= (-1)^{n} \sum_{1 \leq i \leq n} \det \left( H_1, \dots, V_i, \dots, H_n, H_{n+1} \right) \\
\mu_2 &= (-1)^{n-1} \sum_{1 \leq i < j \leq n} \det \left( H_1, \dots, V_i, \dots, V_j, \dots, H_n, H_{n+1} \right) \\
&\vdots \\
\mu_n &= - \det \left( V_1, \dots, V_n, H_{n+1} \right).
\end{align*}
From this and (\ref{degree formula}), we conclude that
\begin{align*}
\sum_{k=0}^n t^k \int_M \mu_k = 
\begin{cases}
\vspace{0.2cm} \deg \left( \varphi_t^v \right) \vol(\mathbb{S}^{n+1}) \sum_{k=0}^{n/2} \binom{n/2}{k} t^{2k}, &\quad \text{if } n \text{ is even} \\ 
 \deg \left( \varphi_t^v \right) \vol(\mathbb{S}^{n+1}) \sqrt{1 + t^2} \sum_{k=0}^{(n-1)/2} \binom{(n-1)/2}{k} t^{2k}, &\quad \text{if } n \text{ is odd.} 
\end{cases}
\end{align*}

When $n$ is even, both sides of the above equation are polynomials in $t$, and the powers in the right hand side are all even, which implies the coefficients 
multiplying odd powers in the left hand side all vanish. When $n$ is odd, the presence of the factor $\sqrt{1 + t^2}$ forces all coefficients in the left hand side to be
zero. Furthermore, notice that $\deg \left( \varphi_t^v \right) = \deg(\gamma)$. To see why this is true, let $i$ and $i_t$ be the inclusions of $\mathbb{S}^{n+1}$ and 
$\mathbb{S}^{n+1}(\sqrt{1 + t^2})$ into $V$. The maps $i_t \circ \varphi_t^v$ and $i \circ \gamma$ are (linearly) homotopic, so that 
$\deg(i_t) \deg \left( \varphi_t^v \right) = \deg(i) \deg(\gamma)$. But the degrees of both $i$ and $i_t$ are obviously equal to $1$. Hence, 
\begin{align*}
\int_M \mu_k = 
\begin{cases}
\deg(\gamma) \vol(\mathbb{S}^{n+1}) \binom{n/2}{k/2}, &\quad \text{if } n \text{ and } k \text{ are even} \\ 
0, &\quad \text{if } n \text{ or } k \text{ is odd}  
\end{cases} 
\end{align*}
completing the proof.

\section{Foliations of hypersurfaces} \label{foliations}

Firstly, recall that if $L^l$ is immersed in $\overline{M}^{n+2}$ with codimension $n+2-l \geq 1$ and $N$ is a unit normal to $L$ at a point $p \in L$, the shape operator
of $L$ at this point in the direction of $N$ is the linear operator $A_{N} : T_p L \to T_p L$ given by
\begin{align*}
A_{N}(X) = - \left( \overline{\nabla}_X \eta \right)^{\top}, \quad X \in T_p L,
\end{align*}
where $\eta$ is a normal and unitary extension of $N$ in a neighbourhood of $p$ and $(\cdot)^{\top}$ indicates projection onto $T_p L$. Inspired by this, 
we extend Definition \ref{invariant shape operator}.

\begin{definition} The invariant shape operator of $L$ at the point $p$ in the direction of $N$ is the linear operator $\alpha_N : T_p L \to T_p L$ given by
\begin{align*}
\alpha_N(X) = \left( \overline{\nabla}_X \widetilde{N} \right)^{\top}, \quad X \in T_p L.
\end{align*}
\end{definition}

\noindent Notice that $\overline{\nabla}_X \widetilde{N} \in \{ N \}^{\perp}$, so that this really coincides with the previous definition when the codimension of $L$ is $1$.

Now, let $\mathcal{F}$ be a transversely oriented codimension-one foliation of the compact, connected and oriented immersed hypersurface $M^{2n+1}$ of $\overline{M}^{2n+2}$.
Consider a unit vector field $v \in \mathfrak{X}(M)$ normal to the leaves of $\mathcal{F}$. In this case, the matrices $\left( -a_{ij} \right)$ and 
$\left( \tilde{a}_{ij} \right)$ at a point $p$ are the matrices of $A_{v(p)}$ and $\alpha_{v(p)}$ --- the shape and invariant shape operators of the leaf $L$ of
$\mathcal{F}$ containing $p$ --- with respect to a chosen basis of $T_p L$. Furthermore,
\begin{align*}
\renewcommand{\arraystretch}{1.5}
\mu_{2n} = - \det \left[
\begin{array}{c|c}
                                &   v_1 - \tilde{v}_1        \\
a_{ij} - \tilde{a}_{ij}         &   \vdots                   \\
                                &   v_{2n} - \tilde{v}_{2n}  \\
\hline
h_{2n+1,1} \cdots h_{2n+1,2n}  &  h_{2n+1,2n+1}  
\end{array} 
\right].
\end{align*}
If $\mu_{2n} = 0$, then $\deg(\gamma) = 0$ due to (\ref{curvature integrals}). This, in turn, implies that $M$ itself is parallelisable. Indeed, consider the vector bundle 
map $\widetilde{\gamma}(p, v) = (\gamma(p), \Gamma_p(v))$, which covers $\gamma$:
\begin{align*}
\begin{CD}
TM @>\tilde{\gamma}>> T \mathbb{S}^{2n+1}\\
@VVV @VVV\\
M @>\gamma>> \mathbb{S}^{2n+1}
\end{CD}
\end{align*}
Notice that $TM$ is the pullback of $T \mathbb{S}^{2n+1}$ by $\gamma$. If the degree of $\gamma$ is zero, then $TM$ is trivial, since homotopic maps induce isomorphic pullback 
bundles. Thus, we proved:

\begin{thm} \label{thmfoliations} Let $\mathcal{F}$ be a transversely orientable codimension-one foliation of the compact, connected and oriented immersed hypersurface $M^{2n+1}$ of
$\overline{M}^{2n+2}$, and let $v$ a unit length vector field tangent to $M$ and normal to the leaves of $\mathcal{F}$. If the operator $A_v + \alpha_v$ has rank 
less than or equal to $2(n-1)$ along the leaves, then the degree of the Gauss map $\gamma : M \to \mathbb{S}^{2n+1}$ is equal to zero. In particular, $M$ is parallelisable.
\end{thm}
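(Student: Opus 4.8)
The plan is to deduce Theorem \ref{thmfoliations} from the Main Theorem (equation (\ref{curvature integrals})) together with the pullback-bundle observation already sketched in the text. First I would unwind the hypothesis on the rank of $A_v + \alpha_v$. Working at a point $p$ on a leaf $L$, choose an orthonormal basis $\{e_1,\dots,e_{2n},e_{2n+1}=v(p)\}$ of $T_pM$ adapted so that $e_1,\dots,e_{2n}$ span $T_pL$. With respect to this basis, the matrix entries $h_{AB}+\alpha_{AB} = \langle (A_p+\alpha_p)(e_B),e_A\rangle$ are exactly the entries of $A_v+\alpha_v$ on $M$ (not on the leaf), and the displayed block form of $\mu_{2n}$ shows that $\mu_{2n} = -\det\left[\begin{smallmatrix} -(a_{ij}-\tilde a_{ij}) & \ast \\ \ast & \ast\end{smallmatrix}\right]$ is, up to sign, the determinant of the matrix obtained from $A_v+\alpha_v$ by keeping its last column (the $H_{2n+1}$ row appears as the bottom row). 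The key algebraic point is: if $A_v+\alpha_v$ has rank $\le 2(n-1) = 2n-2$ everywhere on the leaves, then every $2n\times 2n$ minor formed from its entries that uses all $2n+1$ of the ``leaf directions plus $v$'' degenerates — more precisely the $(2n)\times(2n+1)$ matrix $[\,H_1^{\top}\ \cdots\ H_{2n}^{\top}\,]$ made of the first $2n$ rows of $A_v+\alpha_v$ has rank $\le 2n-2$, hence any $2n\times 2n$ submatrix of it is singular, so $\det(H_1,\dots,H_{2n},H_{2n+1})$ and all the determinants $\det(H_1,\dots,V_i,\dots,V_j,\dots,H_{2n},H_{2n+1})$ appearing in $\mu_{2n}$ — wait, more carefully: I should check that $\mu_{2n} = -\det(V_1,\dots,V_{2n},H_{2n+1})$ and that this, via the block structure, forces $\mu_{2n}\equiv 0$ when the rank drops. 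Actually the cleanest route is: the $V_i$ are rows built from $A_v+\alpha_v$'s leaf data too (the $a_{ij}-\tilde a_{ij}$ are minus the entries of $A_v+\alpha_v$ restricted to leaf directions), so the whole $\mu_{2n}$ determinant is a determinant of a matrix all of whose rows lie in a subspace of dimension $\le 2n-1$ forced by the rank hypothesis, hence vanishes identically. I would spell this linear-algebra step out carefully, as it is the technical heart.

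Once $\mu_{2n}\equiv 0$ pointwise on $M$ (every point lies on a leaf), then trivially $\int_M \mu_{2n} = 0$. Since $M$ has dimension $2n+1$ which is odd, one might worry that (\ref{curvature integrals}) gives no information — but here $n$ in the Main Theorem's notation is the dimension-minus-one of $M$ as a hypersurface of $\overline M^{2n+2}$, so in that theorem's indexing $M$ has dimension $2n+1$, the ambient is $(2n+2)$-dimensional, $V$ is $(2n+2)$-dimensional, and the relevant ``$n$'' of the Main Theorem equals $2n+1$, which is odd. Hmm — then (\ref{curvature integrals}) says $\int_M\mu_k=0$ for all $k$ regardless, and we get nothing. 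Let me re-read: in Theorem \ref{thmfoliations} the hypersurface is $M^{2n+1}\subset \overline M^{2n+2}$, and the Gauss map goes to $\mathbb S^{2n+1}$; but the Main Theorem's $M^{n+1}$ sits in $\overline M^{n+2}$ with Gauss map to $\mathbb S^{n+1}$. Matching: the Main Theorem's ``$n+1$'' is $2n+1$, so the Main Theorem's ``$n$'' is $2n$, which is even, and the Main Theorem's ``$\mathbb S^{n+1}$'' is $\mathbb S^{2n+1}$ — consistent. So in (\ref{curvature integrals}) with $n\rightsquigarrow 2n$ and taking $k\rightsquigarrow 2n$ (even), we get $\int_M \mu_{2n} = \deg(\gamma)\,\vol(\mathbb S^{2n+1})\binom{n}{n} = \deg(\gamma)\,\vol(\mathbb S^{2n+1})$. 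Since $\vol(\mathbb S^{2n+1})\ne 0$ and $\int_M\mu_{2n}=0$, we conclude $\deg(\gamma)=0$.

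Finally I would invoke the pullback-bundle argument exactly as in the preceding paragraph of the text: the bundle map $\widetilde\gamma\colon TM \to T\mathbb S^{2n+1}$, $\widetilde\gamma(p,v)=(\gamma(p),\Gamma_p(v))$, is fiberwise an isomorphism (since each $\Gamma_p$ is an isometry and $D\gamma(p)$-image data shows $T_pM$ maps onto $T_{\gamma(p)}\mathbb S^{2n+1}$ — indeed $\Gamma_p$ sends $\eta(p)^\perp=T_pM$ isometrically onto $\gamma(p)^\perp = T_{\gamma(p)}\mathbb S^{2n+1}$), exhibiting $TM \cong \gamma^*(T\mathbb S^{2n+1})$. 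A degree-zero map $\gamma\colon M^{2n+1}\to\mathbb S^{2n+1}$ between closed oriented manifolds of the same dimension is homotopic to a constant map (this uses $\pi_{2n+1}(\mathbb S^{2n+1})=\mathbb Z$ detected by degree together with the fact that a null-degree self-map factors up to homotopy through $\mathbb S^{2n+1}\setminus\{\mathrm{pt}\}$, which is contractible), and homotopic maps pull back isomorphic bundles; hence $TM$ is isomorphic to the pullback of $T\mathbb S^{2n+1}$ by a constant map, i.e. trivial. Therefore $M$ is parallelisable.

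The main obstacle I anticipate is the linear-algebra step: making rigorous that the rank bound $\operatorname{rank}(A_v+\alpha_v)\le 2n-2$ \emph{on the leaves} forces the specific mixed determinant $\mu_{2n} = -\det(V_1,\dots,V_{2n},H_{2n+1})$ to vanish identically. One has to be careful that the $H$-rows and $V$-rows in $\mu_{2n}$ are assembled from different pieces of data ($A_v+\alpha_v$ acting in leaf directions versus in the $v$ direction), organize them as submatrices of the single operator $A_v+\alpha_v\colon T_pM\to T_pM$, and then argue that a low-rank operator cannot produce a nonzero determinant of this mixed type; the block-matrix display for $\mu_{2n}$ in the text is the right bookkeeping device for this, and I would phrase the argument directly in terms of that display. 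Everything else — applying the Main Theorem, the nonvanishing of $\vol(\mathbb S^{2n+1})$, and the standard pullback/parallelisability conclusion — is routine given the results already established in the excerpt.
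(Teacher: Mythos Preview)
Your approach is essentially the paper's: identify the upper-left $2n\times 2n$ block of the displayed matrix for $\mu_{2n}$ with $-(A_v+\alpha_v)$, use the rank hypothesis to force the rows $V_1,\dots,V_{2n}$ to be linearly dependent (so $\mu_{2n}\equiv 0$), apply the Main Theorem with its ``$n$'' equal to $2n$ and $k=2n$ to get $\deg(\gamma)=0$, and finish with the pullback-bundle argument. Two small clean-ups: it is only the $V_i$ rows (not $H_{2n+1}$) that lie in a $(2n-1)$-dimensional subspace, which already suffices for the determinant to vanish; and the null-homotopy of a degree-zero map $M^{2n+1}\to\mathbb{S}^{2n+1}$ is Hopf's classification $[M,\mathbb{S}^{2n+1}]\cong\mathbb{Z}$, not an argument about self-maps.
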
 

\begin{remark} Instead of a foliation, we may assume only a transversely oriented codimension-one distribution has been given (not necessarily integrable). The 
hypothesis on the rank of $A_v + \alpha_v$ is the same, but now these operators lack some geometrical meaning.
\end{remark}

\begin{cor} \label{Lie group} Let $G^{2n+2}$ be a Lie group with a left invariant metric and equipped with the translational structure of Example \ref{Left translation}. 
Consider a compact, connected and oriented immersed hypersurface $M^{2n+1}$ of $G^{2n+2}$ together with a transversely orientable codimension-one foliation $\mathcal{F}$ 
and a unit length vector field $v$ tangent to $M$ and normal to the leaves of $\mathcal{F}$. If $G$ is commutative and the rank of the shape operator of the leaves is
smaller than or equal to $2(n-1)$, then the degree of the Gauss map $\gamma : M \to \mathbb{S}^{2n+1}$ is zero. In particular, $M$ is parallelisable.
\end{cor}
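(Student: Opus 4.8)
The plan is to reduce Corollary \ref{Lie group} directly to Theorem \ref{thmfoliations} by showing that, under the commutativity hypothesis on $G$, the invariant shape operator $\alpha_v$ of the leaves vanishes identically. Recall from Example \ref{Left translation} that, with the translational structure considered there, the $\Gamma$-invariant vector fields of $G$ are exactly its left invariant vector fields; in particular, for each $p \in M$ the vector field $\widetilde{v(p)} \in \mathfrak{X}(G)$ is left invariant.

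First I would verify that left invariant vector fields on an abelian Lie group with a left invariant metric are parallel. Indeed, if $X, Y, Z$ are left invariant, then $\langle X, Y \rangle$, $\langle X, Z \rangle$ and $\langle Y, Z \rangle$ are constant functions on $G$, and the brackets $[X,Y]$, $[X,Z]$, $[Y,Z]$ all vanish because $G$ is commutative. Koszul's formula then gives $\langle \overline{\nabla}_X Y, Z \rangle = 0$ for every left invariant $Z$, and since left invariant fields span each tangent space, $\overline{\nabla}_X Y = 0$. Consequently $\overline{\nabla}_X \widetilde{v(p)} = 0$ for all $X \in T_p M$, so the invariant shape operator of the leaf through $p$ in the direction $v(p)$, namely $\alpha_{v(p)}(X) = \left( \overline{\nabla}_X \widetilde{v(p)} \right)^{\top}$, is identically zero; hence $\alpha_v \equiv 0$ along the leaves of $\mathcal{F}$.

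With $\alpha_v = 0$, the operator $A_v + \alpha_v$ appearing in Theorem \ref{thmfoliations} coincides with the shape operator $A_v$ of the leaves, so the assumption that the rank of the shape operator of the leaves is at most $2(n-1)$ is precisely the hypothesis of that theorem. Applying Theorem \ref{thmfoliations} then yields $\deg(\gamma) = 0$ and the parallelisability of $M^{2n+1}$, which is the desired conclusion.

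The only step that requires any computation is the vanishing of $\overline{\nabla}_X \widetilde{v(p)}$, and this is the routine Koszul-formula argument sketched above; I do not expect any substantial obstacle, since everything else is a direct appeal to Theorem \ref{thmfoliations}.
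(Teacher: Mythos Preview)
Your proposal is correct and follows essentially the same route as the paper: use Koszul's formula together with commutativity of $G$ and constancy of inner products of left invariant fields to see that left invariant vector fields are parallel, conclude that the invariant shape operator $\alpha_v$ vanishes, and then invoke Theorem~\ref{thmfoliations}. The only difference is cosmetic---you spell out explicitly why $\alpha_{v(p)}=0$ and why the rank hypothesis then matches that of Theorem~\ref{thmfoliations}, whereas the paper states these consequences tersely.
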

\begin{proof}
Given left invariant vector fields $X, Y, Z$ on $G$, the Koszul formula gives
\begin{align*}
2 \langle \overline{\nabla}_X Y, Z\rangle &= X \langle Y,Z \rangle + Y \langle X,Z \rangle - Z \langle X,Y \rangle + \langle [X,Y],Z \rangle 
- \langle [X,Z],Y \rangle - \langle [Y,Z],X \rangle \\
&= X \langle Y,Z \rangle + Y \langle X,Z \rangle - Z \langle X,Y \rangle \\
&= 0,
\end{align*}
since $G$ is commutative and the inner product of left invariant vector fields is constant along $G$. Hence, $\alpha$ is identically zero. The result then 
follows from Theorem \ref{thmfoliations}.
\end{proof} 

\begin{cor} Let $G$ and $M$ be as in Corollary \ref{Lie group}. If $M$ is \emph{not} parallelisable, then it does not admit transversely orientable codimensionone 
foliations with totally geodesic leaves.
\end{cor}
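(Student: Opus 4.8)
The plan is to deduce the statement directly from Corollary \ref{Lie group} by contraposition. So suppose that $M$ \emph{does} admit a transversely orientable codimension-one foliation $\mathcal{F}$ with totally geodesic leaves; the goal is then to show that $M$ must be parallelisable. Since the hypothesis excludes the case where $M$ is a circle, we have $2n+1 \geq 3$, i.e. $n \geq 1$, and hence $2(n-1) \geq 0$. Transverse orientability of $\mathcal{F}$ furnishes a globally defined unit length vector field $v \in \mathfrak{X}(M)$ tangent to $M$ and normal to the leaves, so that all the hypotheses of Corollary \ref{Lie group} are in force except possibly the bound on the rank of the shape operator of the leaves.

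The one remaining point is to observe that the assumption that the leaves are totally geodesic is precisely the statement that the shape operator $A_{v(p)}$ of the leaf through each point $p \in M$ (whose matrix with respect to an adapted basis of the leaf is $(-a_{ij})$, in the notation of Section \ref{foliations}) vanishes identically. Indeed, $\langle A_{v(p)} X, Y \rangle = -\langle \nabla_X v, Y\rangle = \langle \overline{\nabla}_X Y, v \rangle$ for $X, Y$ tangent to the leaf, and this is a component of the second fundamental form of the leaf, which is zero for a totally geodesic leaf. Thus the shape operator of the leaves has rank $0$ everywhere, and since $0 \leq 2(n-1)$ the rank hypothesis of Corollary \ref{Lie group} is automatically satisfied.

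Applying Corollary \ref{Lie group} we conclude that $\deg(\gamma) = 0$, and hence that $M$ is parallelisable; this contradicts the hypothesis, so no such foliation can exist. There is essentially no genuine obstacle here: the only things to make sure of are that transverse orientability supplies the global normal field $v$ needed to invoke the corollary, that the totally geodesic condition forces the relevant shape operator to vanish, and that the dimensional bookkeeping $0 \leq 2(n-1)$ holds, which it does once one rules out the trivial one-dimensional case using the non-parallelisability assumption.
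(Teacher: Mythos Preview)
Your argument is correct and is exactly the (implicit) reasoning the paper has in mind: the corollary is stated without proof precisely because totally geodesic leaves force $A_v \equiv 0$, so the rank condition of Corollary~\ref{Lie group} is trivially met and parallelisability follows. Your care with the edge case $n=0$ is a nice touch, and your justification that $(-a_{ij})$ really is the intrinsic shape operator of the leaf (so that ``totally geodesic in $M$'' is the relevant notion) fills in the only detail one might pause over.
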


Theorem \ref{thmfoliations} asserts that for a hypersurface in a translational manifold, the existence of distributions with certain nullity relates directly to the degree of its normal map. The problem of finding a Riemannian metric with respect to which a given codimension-one foliation is totally geodesic was completely solved by Ghys \cite{ghys}. Theorem \ref{thmfoliations} and its consequences make a small contribution in the light of this general situation. 

On the other hand, Euclidean hypersurfaces carrying a codimension-one totally geodesic foliation have been classified by Dajczer {\it et al} in \cite{daj}. We hope that combining the extrinsic geometry of distributions and foliations on hypersurfaces to the topology of its immersion will bring new ideas and insights towards a general framework.

\end{document}